\newcommand{\Delt}{\ensuremath{-2(D-2)^{-1}}}
\newcommand{\grad}{\operatorname{grad}}
\newcommand{\ela}{\ell_{\alpha}}
\newcommand{\elb}{\ell_{\beta}}
\newcommand{\mathH}{\mathbb{H}}
\newcommand{\mathR}{\mathbb{R}}
\newcommand{\mathZ}{\mathbb{Z}}
\newcommand{\bR}{\mathbf{R}}
\newcommand{\bg}{\mathbf{g}}
\newtheorem{definition}{Definition}
\newtheorem{proposition}[definition]{Proposition}
\newtheorem{theorem}[definition]{Theorem}
\newtheorem{corollary}[definition]{Corollary}
\newtheorem*{thrm}{Theorem}
\title{The vanishing rate of Weil-Petersson sectional curvatures}
\author{Scott A. Wolpert}
\subjclass[2010]{30F60, 53C21, 32G15, 31A10}
\keywords{Beltrami differentials, Green's function and Weil-Petersson sectional curvatures}
\begin{document}

\maketitle

\begin{abstract}
The Weil-Petersson metric for the moduli space of Riemann surfaces has negative sectional curvature.  Surfaces represented in the complement of a compact set in the moduli space have short geodesics. At such surfaces the Weil-Petersson metric is approximately a product metric. An almost product metric has sections with almost vanishing curvature.  We bound the sectional curvature away from zero in terms of the product of lengths of short geodesics on Riemann surfaces.  We give examples and an expectation for the actual vanishing rate.  
\end{abstract}

\section{Introduction.}

Let $\mathcal{T}$ be the Teichm\"{u}ller space of marked genus $g$, $n$-punctured Riemann surfaces $R$ with hyperbolic metrics.  Associated to the hyperbolic metrics on Riemann surfaces are the Weil-Petersson K\"{a}hler metric and geodesic-length functions on $\mathcal T$.  The metric is incomplete.  The metric completion, the augmented Teichm\"{u}ller space $\overline{\mathcal T}$, is $\operatorname{CAT(0)}$ - a simply connected complete metric space with non positive curvature, \cite{Wlcbms}.     

Convexity is a basic property of the geodesic-length, Weil-Petersson geometry. Convexity is a consideration for the large scale behavior of geodesics, for the Nielsen Realization Problem \cite{Kerck,Wlnielsen}, for the diameter of the moduli space \cite{CaPa}, for the classification of the action of the mapping class group \cite{DW2} and for the ergodicity of the geodesic flow \cite{BMW}.  The Weil-Petersson metric has negative sectional curvature \cite{Trmbook, Wlchern} with infimum negative infinity and supremum zero, except in the one-dimensional case where a section is necessarily holomorphic.  The holomorphic sectional curvature is bounded above by $-1/Area$, for the hyperbolic area of a surface. There are analyses of the uniform behavior of curvature depending on the systole and topological type; see the new work of Bridgeman and Wu on Ricci and scalar curvature \cite{BrWu}.   

Hyperbolic surfaces have a thick-thin decomposition consisting of regions where the injectivity radius is bounded below by a positive constant and a complementary thin region.  Thin regions are disjoint unions of collars and cusps. The Weil-Petersson metric, covariant derivative and curvature tensor exhibit an asymptotic product structure with a factor for each thick region that is not a three-holed sphere and a one complex dimensional factor for each collar.  A product metric has sectional curvature nulls - a $2$-plane has null curvature if its projection to each factor is at most one real dimensional.  In \cite[Corollary 22]{Wlcurv}, we show that the thick-thin decomposition characterizes the Weil-Petersson asymptotic flat subspaces.    

We now examine the rate of sectional curvatures tending to zero - the rate of strict convexity tending to convexity.  The Weil-Petersson sectional curvature involves the difference of three evaluations of the quartic form
\[
\int_{R}\int_{R}\alpha\bar \beta\, G\,\gamma\bar\delta\, dA\,dA,
\]
where $\alpha, \beta, \gamma$ and $\delta$ are harmonic Beltrami differentials and $G$ is the Green's function for $-2(D-2)^{-1}$, for $D$ the hyperbolic metric Laplace-Beltrami operator and $dA$ the hyperbolic area element.  The quartic form involves the mass distribution of the Beltrami differentials and the propagation decay of the Green's function.  The proof of negative sectional curvature \cite[Theorem 4.5]{Wlchern} involves two applications of H\"{o}lder's inequality. Bounding sectional curvatures from above involves bounding the difference of small quantities and analyzing almost H\"{o}lder equality.  

The Green's function $G$ can be given as a sum over paths in the universal cover.  We use Dehn's Theorem for parameterizing the families of simple paths crossing the thin collars.  For $\ela$ the length of a short geodesic $\alpha$, we show that the propagation decay of $G(p,q)$ is the product of $\ela^3$ for each thin collar and $\ela$ for each half thin collar crossed by the minimal path from $p$ to $q$.  The propagation decay is a basic consideration for almost vanishing curvatures.  

We combine our understandings for a general bound.
\begin{thrm}
	There is a positive constant $C^*$ depending on topological type, such that the Weil-Petersson sectional curvature is at most $-C^*\sigma^7$ for $\sigma$ the product of small geodesic-lengths.
\end{thrm}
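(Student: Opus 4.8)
The plan is to express the sectional curvature through the positive quartic form $Q(\alpha,\beta,\gamma,\delta)=\int_R\int_R \alpha\bar\beta\,G\,\gamma\bar\delta\,dA\,dA$ of the Introduction, to reduce the desired uniform lower bound for $|K|$ to a single scalar ``H\"{o}lder defect,'' and then to bound that defect from below by combining the mass distribution of normalized harmonic Beltrami differentials with the two-sided propagation estimate for $G$. First I would fix a surface $R$ with short geodesics $\alpha_1,\dots,\alpha_k$ of lengths $\ell_{\alpha_j}$ and set $\sigma=\prod_j \ell_{\alpha_j}$, and then fix an arbitrary real $2$-plane spanned by orthonormal harmonic Beltrami differentials $\mu,\nu$. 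Writing the curvature as the fixed combination of the three evaluations $Q(\mu,\mu,\nu,\nu)$, $Q(\mu,\nu,\nu,\mu)$, $Q(\mu,\nu,\mu,\nu)$, I would record the Tromba--Wolpert identity in the form $K=-(A-B)$ with $A,B\ge 0$, where the inequality $B\le A$, hence $K\le 0$, is exactly the two applications of H\"{o}lder/Cauchy--Schwarz for the positive kernel $G$. The task then becomes the quantitative reverse statement $A-B\ge C^*\sigma^7$.

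Second, I would invoke the asymptotic product structure of the metric and curvature tensor furnished by the thick-thin decomposition to reduce to the extremal configuration. A $2$-plane lying in a single nontrivial factor, and in particular any complex (holomorphic) line, already has curvature bounded away from zero by a constant depending only on topological type, the holomorphic case being the $-1/\mathrm{Area}$ bound; so the nearly flat directions are precisely the mixed planes whose factor-projections separate into distinct factors. For such $\mu,\nu$ I would isolate the cross contribution to $A-B$: the part of $Q$ in which the mass of $\mu\bar\mu$ sits in one region, the mass of $\nu\bar\nu$ in another, and the two are linked only through the propagation of $G$ across the intervening collars. The crucial point is that this coupling, though small, does not vanish.

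Third is the core estimate. I would insert a partition of unity adapted to the thick-thin pieces, expand $A-B$, and bound the surviving cross term below. Each normalized collar differential concentrates its mass in its collar with a controlled profile, so the relevant region integrals of $|\mu|^2$ and $|\nu|^2$ are each bounded below by a fixed power of the corresponding $\ell_\alpha$; pairing these against the propagation-decay \emph{lower} bound for $G$, a factor $\ell_\alpha^3$ for each full collar and $\ell_\alpha$ for each half collar on the minimal path, and multiplying the independent contributions over the collars that are crossed, produces the cumulative power. The bookkeeping I expect is two mass factors of order $\ell_\alpha^2$ against the cubic propagation factor $\ell_\alpha^3$, giving $\ell_\alpha^7$ per separating collar. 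Because each short length lies below a universal constant, the product of such per-collar factors dominates $\sigma^7=\prod_j\ell_{\alpha_j}^7$, which yields the stated bound uniformly in $k$ and in the plane.

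The hard part will be the \emph{almost H\"{o}lder equality} analysis flagged in the Introduction: $A$ and $B$ are individually small and nearly equal, so the sign and the size of their difference are delicate and no crude estimate of either term suffices. The decisive step is to show the defect is not merely positive but bounded below by $C^*\sigma^7$, and this forces the use of the propagation-decay lower bound for $G$, not only its decay upper bound, to certify a genuinely nondegenerate coupling between the separated factors. Establishing that two-sided propagation estimate from the path-sum for $G$ over the universal cover, with the crossing families parameterized by Dehn's theorem, and then verifying that the defect survives the summation over those crossing families and over every number and configuration of short geodesics, is where I expect the real work to lie.
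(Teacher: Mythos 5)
Your framing is right at the top level (curvature as a H\"{o}lder defect $A-B$ for the positive kernel $G$, negativity from two Cauchy--Schwarz applications, Dehn-parameterized path families for the propagation bounds), but your middle two steps diverge from anything that works, and the core mechanism is missing. Your step 2 --- using the asymptotic product structure to dismiss ``single-factor'' planes and reduce to mixed planes with a chosen adapted basis --- is not justified: on a surface with short geodesics the factor projections are only approximate, there is a continuum of intermediate plane positions, and no uniform curvature bound for near-single-factor planes is available (the paper states this decomposition picture only as an \emph{expectation} in its final section, after the theorem is proved). The paper's actual proof makes no case analysis on the plane at all. For an arbitrary orthonormal pair $\mu_1,\mu_2$ it runs a quantitative dichotomy on the chain of inequalities whose first and last entries are curvature terms: either some difference among the first five entries already exceeds a threshold $C'''\sigma^7$ (done, since any gap in the chain bounds $-\mathbf{R}$ below), or near-equality forces, via the Green's function lower bound $G\ge C''\sigma^3$ off cusp regions, first $\|\mu_1\overline{\mu_2}\|_1\lesssim \sigma^2$, then via the mean value inequality a pointwise bound $|\mu_1\overline{\mu_2}|\lesssim\sigma$, hence $(|f|,|f|)\lesssim\sigma^3$ for $f=\Re\mu_1\overline{\mu_2}$; this is strictly beaten by the universal lower bound $(|\mu_1|^2,|\mu_2|^2)\ge \tfrac{49}{64}C''\sigma^3$, so the difference of the second and last entries is a positive multiple of $\sigma^3$ and again $-\mathbf{R}\gtrsim\sigma^7$. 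The exponent seven arises as twice the collar exponent three plus one for the mean value step --- not from your per-collar bookkeeping ``two mass factors $\ell_{\alpha}^2$ against propagation $\ell_{\alpha}^3$,'' which is reverse-engineered numerology: normalized collar differentials carry unit $L^2$ mass in their collars, and in the paper's own examples the cross pairing is bounded below by $\ell_{\alpha_1}\ell_{\alpha_2}$, not $\ell^7$.

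The genuine gap is in your step 3. Lower-bounding the cross contribution to $A$ by pairing the mass of $|\mu|^2$ and $|\nu|^2$ against the propagation lower bound for $G$ does not lower-bound $A-B$: the term $B$ contains a cross contribution of exactly the same order, and the Cauchy--Schwarz defect depends on the phase and sign structure of $\mu\overline{\nu}$ (in the paper, on $(f^+,f^-)$, $(h,h)$, and the split of $f$ into positive and negative parts), not merely on where $|\mu|^2$ and $|\nu|^2$ sit. You correctly flag the almost-H\"{o}lder-equality analysis as ``the hard part,'' but you supply no mechanism for it; the threshold dichotomy with the $L^1$-to-pointwise bootstrap, together with the cusp-norm truncation that makes the Green's function bound applicable, \emph{is} that mechanism, and it is absent from your proposal. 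A further minor miscalibration: the proof needs only the lower bound $G\ge C''\sigma^3$ (Corollary 7) plus the unit norm of $\Delta$ on $C_0$; the two-sided propagation estimate you insist on establishing is supplementary in the paper and is not used in the theorem's proof.
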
 
We discuss the expectation that the optimal exponent is three. In the final section we analyze the sectional curvature for three standard examples. The vanishing rates follow the expectations.      

\section{Preliminaries.}

A Riemann surface with a finite area hyperbolic metric is uniformized by a discrete group $\Gamma$ acting as isometries on the upper half plane $\mathH$.  A Riemann surface with hyperbolic metric can be considered as the union of a \emph{thick} region where the injectivity radius is bounded below by a positive constant and a complementary \emph{thin} region.   The totality of all thick regions of Riemann surfaces of a given topological type forms a compact set of metric spaces in the Gromov-Hausdorff topology.  A \emph{thin} is a disjoint union of collar and cusp regions.   

For a simple geodesic $\alpha$ of length $\ela$, the extended collar about the geodesic is the region  $\{d(p,\alpha)\le\tilde w(\alpha)\}$, for the distance $d(\ ,\ )$ and half width $\tilde w(\alpha)$, $\sinh \tilde w(\alpha)\sinh \ela/2=1$.  The half width is given as $\tilde w(\alpha)=\log 4/\ela + O(\ela^2)$ for $\ela$ small. An extended collar is covered by $\{d(z,i\mathR^+)\le\tilde w(\alpha)\}\subset\mathH$ with deck transformation generated by $z\rightarrow e^{\ela}z$.   The quotient of the extended collar by the cyclic group of deck transformations embeds into the Riemann surface.  For $z\in\mathH$ and $\alpha$ lifting to the imaginary axis, the region is approximately $\{\ela/2\le\arg z\le \pi-\ela/2\}$.  An extended cusp region is covered by the region $\{\Im z\ge 1/2\}$ with deck transformation $z\rightarrow z+1$.  The quotient of the cusp region by the cyclic group of deck transformations embeds into the Riemann surface.  To ensure that uniform bands around boundaries embed into the Riemann surface, we use \emph{collars} defined by covering regions $\{\ela\le \arg z\le \pi-\ela\}$ and \emph{cusp regions} defined by covering regions $\{\Im z\ge 1\}$. The width of a collar is $2w(\alpha)=2\log 2/\ela +O(\ela^2)$.   
\begin{proposition}\textup{\cite[Chapter 4]{Busbook}}
	For a Riemann surface of genus $g$ with $n$ punctures, given pairwise disjoint simple closed geodesics, the extended collars about the geodesics and the extended cusp regions are mutually disjoint.   
\end{proposition}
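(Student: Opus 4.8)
The plan is to lift everything to the universal cover $\mathH$ and to reduce the statement to a single geometric inequality between pairs of the covering regions. Write $\Gamma$ for the uniformizing group. Each given simple closed geodesic and each cusp carries a full $\Gamma$-orbit of lifts: a simple closed geodesic $\alpha$ lifts to a $\Gamma$-invariant family of complete geodesics in $\mathH$, and a cusp lifts to a $\Gamma$-invariant family of horoballs. The extended collar and cusp quotients embed and are mutually disjoint on $R$ precisely when, in $\mathH$, the $\tilde w$-neighborhoods of all the axis-lifts together with all the horoball-lifts are pairwise disjoint, where ``pairwise'' includes two distinct lifts of a single $\alpha$ (this records the embedding of one collar) as well as lifts of distinct geodesics and cusps (this records mutual disjointness). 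The first observation is that, because the geodesics are simple and pairwise disjoint, any two distinct axis-lifts $A$, $A'$ are disjoint non-crossing geodesics in $\mathH$: a crossing of two lifts would descend to a self-intersection or to an intersection of the geodesics on $R$. Likewise distinct horoball-lifts have disjoint closures, and no axis-lift ends at a cusp fixed point. So the problem becomes local: it suffices to treat two disjoint axes, an axis and a horoball, and two disjoint horoballs.

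For two disjoint axes I would normalize so that $A=i\mathR^{+}$ and $g\colon z\mapsto e^{\ela}z$ generates the stabilizer of $A$, while $A'$ is the axis of a hyperbolic $g'\in\Gamma$ of translation length $\elb$. Each equidistant neighborhood is convex and the distance between two disjoint geodesics is realized along their common perpendicular, so $N(A,\tilde w(\alpha))$ and $N(A',\tilde w(\beta))$ are disjoint if and only if $d(A,A')\ge \tilde w(\alpha)+\tilde w(\beta)$. Everything therefore reduces to the distance bound
\[
d(A,A')\ \ge\ \tilde w(\alpha)+\tilde w(\beta),\qquad \sinh \tilde w(\alpha)\sinh\tfrac{\ela}{2}=\sinh \tilde w(\beta)\sinh\tfrac{\elb}{2}=1;
\]
the case in which $A'$ is a second lift of $\alpha$ is the equal-length case $\ela=\elb$ and is exactly the embedding of one collar. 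To prove the bound I would run the discreteness argument: if a point $p$ lay in both neighborhoods then $d(p,A)\le \tilde w(\alpha)$ and $d(p,A')\le \tilde w(\beta)$, and the displacement identity $\sinh\tfrac12 d(p,gp)=\cosh d(p,A)\,\sinh\tfrac{\ela}{2}$, combined with $\cosh \tilde w(\alpha)=\coth\tfrac{\ela}{2}$, bounds the displacements of $p$ under $g$ and $g'$; a Jorgensen/Shimizu-type inequality for the two-generator group $\langle g,g'\rangle$ then forces $A=A'$, contradicting distinctness. Equivalently, and this is the route in \cite{Busbook}, one computes directly in the Lambert quadrilateral formed by $A$, $A'$, their common perpendicular, and the perpendicular from $p$, and reads off the threshold from hyperbolic trigonometry.

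The cusp cases are parallel. Normalizing a cusp to $\infty$ with parabolic $z\mapsto z+1$, the extended cusp region is the horoball $\{\Im z\ge 1/2\}$, whose boundary horocycle has length $2$. For an axis $A$ or a second horoball I would again reduce to precise invariance of this horoball: writing a competing element as $\left(\begin{smallmatrix} a&b\\ c&d\end{smallmatrix}\right)$ with $c\neq 0$, the image of $\{\Im z\ge t\}$ is a horoball of Euclidean diameter $1/(c^{2}t)$ tangent to $\mathR$ at $a/c$, so disjointness from $\{\Im z\ge 1/2\}$ amounts to a lower bound on $|c|$. Discreteness supplies this bound, and the sharp threshold is exactly the length-$2$ horocycle; this is the cusp analogue of the collar computation, and I would cite \cite{Busbook} for the sharp constant.

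The main obstacle is sharpness in the distance bound. Disjointness of the axis-lifts alone is not enough, since two disjoint geodesics in $\mathH$ can be arbitrarily close, so the whole content is that discreteness of $\Gamma$ spreads distinct axes apart by at least the collar threshold, and the threshold $\sinh \tilde w(\alpha)\sinh\tfrac{\ela}{2}=1$ sits exactly on the boundary of what discreteness permits. The inequality is thus tight and cannot be reached by soft estimates; it requires either the extremal trigonometric identity or a sharp Jorgensen/Shimizu inequality, which is the one genuinely delicate point, and the reason I would follow the computation in \cite{Busbook} rather than reprove it.
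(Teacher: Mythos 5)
The paper offers no argument for this proposition at all --- it is quoted from Buser with the citation \cite[Chapter 4]{Busbook} --- so the comparison is between your sketch and Buser's Chapter 4 proof. Your reduction is correct and cleanly stated: embedding and mutual disjointness on $R$ are equivalent to pairwise disjointness, in $\mathH$, of the $\tilde w$-neighborhoods of the axis-lifts and of the horoball-lifts, and simplicity plus disjointness of the curves guarantee that distinct lifts do not cross, so everything hangs on the two sharp thresholds. But note that the route you attribute to Buser (axis, common perpendicular, Lambert quadrilateral in $\mathH$) is not his: he extends the given geodesics to a pants decomposition and proves disjointness of the half-collars inside each pair of pants from right-angled hexagon trigonometry, with degenerate hexagons handling the cusp cases. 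The sharp constant is visible there at a glance: the distance between two boundary geodesics of a pants with boundary lengths $\ela,\elb,\ell_{\gamma}$ satisfies
\[
\cosh d \,=\, \frac{\cosh(\ell_{\gamma}/2)+\cosh(\ela/2)\cosh(\elb/2)}{\sinh(\ela/2)\sinh(\elb/2)} \,\ge\, \frac{1+\cosh(\ela/2)\cosh(\elb/2)}{\sinh(\ela/2)\sinh(\elb/2)} \,=\,\cosh\big(\tilde w(\alpha)+\tilde w(\beta)\big),
\]
which is exactly your axis-distance inequality, with equality in the pinching limit $\ell_{\gamma}\to 0$.

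The genuine gap is your repeated suggestion that discreteness alone, via a Jorgensen/Shimizu-type two-generator inequality, could supply the sharp thresholds; it cannot, and your phrase ``discreteness supplies this bound'' is concretely false at the stated constants. For the cusp case: with stabilizer generated by $z\mapsto z+1$, Shimizu's lemma gives only $|c|\ge 1$, which makes the \emph{length-one} horoball $\{\Im z>1\}$ precisely invariant; disjointness of $\gamma\{\Im z\ge 1/2\}$ from $\{\Im z\ge 1/2\}$ requires the image horoball's height $2/|c|^2\le 1/2$, i.e.\ $|c|\ge 2$, and this fails for discrete groups with torsion ($\operatorname{PSL}(2,\mathZ)$ has elements with $|c|=1$), so torsion-freeness is essential input, not a refinement. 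For the collar case: your displacement bound gives $\sinh\frac12 d(p,gp)\le\cosh\tilde w(\alpha)\sinh(\ela/2)=\cosh(\ela/2)$, which is bounded \emph{below} by $1$ --- these displacements are above any Margulis-type threshold, so no small-displacement argument applies; worse, the inequality $d(A,A')\ge\tilde w(\alpha)+\tilde w(\beta)$ is false for non-crossing axes in general discrete torsion-free groups, since a non-simple closed geodesic has a strictly smaller maximal embedded collar (Halpern, Matelski), and non-crossing of the two chosen lifts does not encode simplicity of the projected geodesics. What saves your reduction is precisely the global hypothesis it must carry along: the curves are simple and disjoint, they remain so in the quotient by $\langle g,g'\rangle$, and there the pants/hexagon computation above applies. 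So at the sharp constant your fallback to Buser's computation is not a stylistic choice --- it is the proof, and the paper's bare citation reflects that.
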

The systole $\rho$ of a Riemann surface is the length of the shortest closed geodesic.  The systole is twice the minimal value of the injectivity radius.  The injectivity radius is small on the thin region and satisfies $\operatorname{inj}(p)e^{\delta(p)}$ is bounded above and below by positive constants, where $\delta$ is the distance to the collar or cusp boundary.  The diameters of thick regions are bounded.  Consequently the diameter of the complement of the cusp regions is given as a sum of collar widths and a bounded remainder.  

A \emph{pair of pants decomposition} is the specification of $3g-3+n$ homotopically non trivial, disjoint simple closed curves decomposing a surface into subsurfaces of topological type $(0,3)$.

We use a mean value inequality.  For holomorphic $n$-differentials there are positive constants such that 
\begin{equation}\label{mvi}
|\varphi(ds^2)^{-n/2}|(p)\,\le\,C\int_{B(p,1)}|\varphi(ds^2)^{-n/2}|\,dA\,\le\,C'\operatorname{inj}(p)^{-1}\|\varphi\|_1,
\end{equation}
for the injectivity radius, $ds^2$ the hyperbolic metric, and $dA$ hyperbolic area.  The first inequality is established in $\mathH$ by applying the mean value inequality for holomorphic functions for metric circles about a point and integrating the radius.  The second inequality accounts for the covering multiplicity of balls. 

We also use a pointwise bound for the magnitude of an $n$-differential in a cusp region.  Represent a cusp by a neighborhood of the origin in the punctured unit disc.  The hyperbolic metric is $ds^2=(|dw|/|w|\log|w|)^2$.  A holomorphic $n$-differential $\varphi$, with $\varphi(ds^2)^{-n/2}$ bounded,  is bounded on the cusp region $|w|\le e^{-\pi}$ as follows
\begin{multline}\label{Sch}
|\varphi(ds^2)^{-n/2}|(w)\le\pi^{-n}e^{\pi}|w|(\log 1/|w|)^n\max_{|w|=e^{-\pi}}|\varphi(ds^2)^{-n/2}|\le\\ C'\pi^{-n}e^{\pi}|w|(\log 1/|w|)^n\|\varphi\|_1.
\end{multline}
To establish the two inequalities, the product $\varphi(ds^2)^{-n/2}$ is written as
$$f(w)(dw/w)^n((|w|\log 1/|w|)/|dw|)^n
$$ 
for $f(w)$ holomorphic and vanishing at the origin.  Apply the Schwarz Lemma for the disc $|w|\le e^{-\pi}$, to find the inequality $|f|\le e^{\pi}|w|\max_{|w|=e^{-\pi}}|f|$.  For the first inequality note that $|f|=\pi^{-n}|\varphi(ds^2)^{-n/2}|$ on $|w|=e^{-\pi}$.  For the second inequality note that on the cusp region boundary the injectivity radius is at least unity and apply (\ref{mvi}).  We will use (\ref{Sch}) to uniformly bound differentials in cusp regions.  We will apply the inequalities for a product $\phi\psi$ of quadratic differentials, noting that $|\phi\psi|=|\phi\overline{\psi}|$.   

Points of the Teichm\"{u}ller space $\mathcal T(\Gamma)$ are equivalence classes $\{(R,ds^2,f)\}$ of homotopy marked genus $g$, $n$-punctured Riemann surfaces with complete hyperbolic metrics and reference homeomorphisms $f:F\rightarrow R$ from a base surface $F$.  Triples are equivalent provided there is a conformal isomorphism homotopic to the composition of reference homeomorphisms.  Basic invariants of a hyperbolic metric are the lengths of the unique geodesic representatives of the non peripheral free homotopy classes.  A non peripheral free homotopy class $[\alpha]$ on $F$ determines the $\alpha$ geodesic-length function $\ela$ - the length of the representative geodesic on $R$.  Points of the moduli space of Riemann surfaces $\mathcal M(\Gamma)$ are conformal equivalence classes of Riemann surfaces with hyperbolic metrics. Teichm\"{u}ller space is an orbifold covering of the moduli space.  Mumford's compactness theorem provides that the subsets of $\mathcal M(\Gamma)$ with the systole bounded below by a positive constant are compact sets.

\section{The operator $\Delta=\Delt$ and its Green's function.}

The deformation equation for a hyperbolic metric involves the Laplace-Beltrami operator $D$ acting on $L^2(\mathH/\Gamma)$.   Solving for the deformed hyperbolic metric involves the operator $\Delta=\Delt$.  We recall the  properties of $D$ and $\Delta$.  The Laplace-Beltrami operator is essentially self-adjoint acting on $L^2(\mathH/\Gamma)$.  The integration by parts formula 
\[
\int_{\mathH/\Gamma}fDh\,dA  \,=\,-\int_{\mathH/\Gamma}\nabla f\nabla h
\,dA
\]
provides that the spectrum of $D$ is non positive and that $\Delt$ is a  bounded positive operator acting on $L^2(\mathH/\Gamma)$ with unit norm.  The maximum principle for the equation $(D-2)f=h$ provides that $2\max |f|\le \max |h|$, for $h$ continuous, vanishing at any cusps.  Assuming that $f$ vanishes at any cusps, at a maximum $p$ of $f$, then $Df(p)\le 0$ and consequently $f(p)\le -h(p)$; at a minimum $q$ of $f$ then $Df(q)\le 0$ and $2f(q)\le -h(q)$ (if $f(q)$ is negative the inequality for the absolute value follows).  
We specify the operator $\Delta=\Delt$ by a Green's function. Basic estimates show that $\Delta$ is a self-map of $C_0(\mathH/\Gamma)$.  By the above argument the operator has unit norm.   The inequalities also provide that $f$ is non negative if $h$ is non negative. We summarize the basic properties of the operator \cite{GT,Wells}. 

\begin{theorem}
	The operator $\Delt$ is self-adjoint, positive with unit norm on $L^2(\mathH/\Gamma)$ and positive with unit norm on $C_0(\mathH/\Gamma)$.  The operator has a positive symmetric integral kernel Green's function.
\end{theorem}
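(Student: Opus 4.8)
The plan is to separate the four assertions---self-adjointness with the $L^2$ bound, the $C_0$ bound, positivity of the operator, and the existence, symmetry and positivity of the kernel---and to reduce each to the two tools already in hand: the spectral theorem for the essentially self-adjoint operator $D$, and the maximum principle for $(D-2)f=h$.

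First I would dispatch the $L^2(\mathH/\Gamma)$ claims by functional calculus. Integration by parts gives $\int_{\mathH/\Gamma} fDf\,dA=-\int_{\mathH/\Gamma}|\nabla f|^2\,dA\le 0$, so the self-adjoint $D$ has $\operatorname{spec}(D)\subset(-\infty,0]$; since the area is finite the constants lie in $L^2$ and $D1=0$, so the endpoint $0$ is attained. The function $\lambda\mapsto -2(\lambda-2)^{-1}$ is bounded and carries $(-\infty,0]$ increasingly onto $(0,1]$, so by the spectral mapping theorem $\Delta=\Delt$ is self-adjoint and positive with $\operatorname{spec}(\Delta)\subset(0,1]$. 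Hence $\|\Delta\|_{L^2}\le 1$, with equality because $\Delta 1=1$. This settles the $L^2$ statement.

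Next, on $C_0(\mathH/\Gamma)$ I would first confirm that $\Delta$ is a self-map, using interior elliptic (Schauder) estimates for $D-2$ together with the cusp decay recorded in (\ref{Sch}) and (\ref{mvi}) to control the image near the punctures. The maximum principle gives $2\|f\|_\infty\le\|h\|_\infty$ for $(D-2)f=h$, i.e. $\|\Delta\|_{C_0}\le 1$; the bound is sharp since $\Delta$ fixes constants, and approximating the constant $1$ by functions that equal $1$ on exhausting compacta and decay in the cusps forces the norm to equal $1$. Positivity of the operator is the same maximum-principle argument run once more: if $h\ge 0$ then $g=\Delta h$ solves $(D-2)g=-2h\le 0$, and at a hypothetical negative interior minimum one would have $Dg\ge 0$ while $Dg=2g-2h<0$, a contradiction, so $g\ge 0$.

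Finally, for the Green's function I would realize $\Delta$ as an integral operator. On $\mathH$ the resolvent $\Delt$ has an explicit radial kernel depending only on hyperbolic distance and decaying exponentially; summing this kernel over $\Gamma$---with convergence guaranteed by that decay---produces a kernel $G(p,q)$ on the quotient, whose symmetry also follows directly from self-adjointness and reality of $\Delta$. Non-negativity $G\ge 0$ is then immediate from the positivity of $\Delta$ applied to nonnegative test functions. The delicate point, and the step I expect to be the main obstacle, is upgrading this to strict positivity: away from the diagonal $G(\cdot,q)$ is a nonnegative solution of $(D-2)u=0$, so the strong (Hopf) maximum principle forbids an interior zero unless $G\equiv 0$, which the $L^2$ computation rules out. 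Assembling the kernel construction, its convergence in the cusps, and this strong-maximum-principle argument into a clean whole is where the real care is needed.
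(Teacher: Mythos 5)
Your proposal is correct and follows the same overall architecture as the paper: the integration-by-parts identity to place $\operatorname{spec}(D)\subset(-\infty,0]$ and get boundedness and positivity of $\Delt$ on $L^2$, the maximum principle for $(D-2)f=h$ to get the $C_0$ norm bound and positivity preservation, and the realization of the kernel as the group sum of the radial fundamental solution on $\mathH$ (the paper delegates this to Fay). You add worthwhile detail the paper leaves implicit: the explicit spectral mapping $\lambda\mapsto-2(\lambda-2)^{-1}$ and the observation that $1\in L^2(\mathH/\Gamma)$ (finite area) with $\Delta 1=1$, which pins the norms at exactly $1$ rather than merely $\le 1$. The one genuine divergence is strict positivity of $G$: you treat it as the delicate step and invoke the Hopf strong maximum principle for the equation $(D-2)u=0$ off the diagonal. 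That argument is valid, but it is unnecessary in the paper's setup, where $G(p,q)=\sum_{\gamma\in\Gamma}-2Q_1(d(p,\gamma q))$ is strictly positive term by term because the fundamental solution $-Q_1$ is itself positive; the only analytic point is convergence of the sum, which holds since the exponent $2$ in $e^{-2d}$ exceeds the critical exponent $1$ of a finite-coarea Fuchsian group. Your route buys independence from knowing the sign of $-Q_1$, at the cost of an extra PDE argument. One small blemish: you cite (\ref{mvi}) and (\ref{Sch}) to control the image of $\Delta$ near the cusps, but those inequalities are stated for holomorphic $n$-differentials, not for solutions of $(D-2)f=h$; the self-map property of $C_0$ is better obtained from the kernel representation together with $\int G(p,q)\,dA(q)=1$ and the exponential decay of $G$, which shows $\Delta h(p)\to 0$ as $p$ enters a cusp when $h$ does.
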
  

The Green's function is given by the uniformization group sum
\[
G(p,q)=\sum_{\gamma\in\Gamma} -2Q_1(d(p,\gamma q)),
\]
for $Q_1$ an associated Legendre function and $d(\ ,\ )$ hyperbolic distance on $\mathH$, \cite[\S1., pgs. 147, 148 and 155]{Fay}.  The positive function $-Q_1$ has a logarithmic singularity at the origin and satisfies $-Q_1\approx e^{-2d(\ ,\ )}$ at large distance.  The fundamental solution $-2Q_1$ is the Green's function for the operator $\Delta$ acting on functions small at infinity on $\mathH$.

\section{Estimating exponential-distance sums.}

Basic quantities for the potential and deformation theory of hyperbolic surfaces are give as uniformization group sums.   The Green's function of the operator $\Delt$ has the form of a sum of the exponential hyperbolic distance $e^{-2d(\ ,\ )}$.  For a geodesic-length function $\ela$, the gradient $\grad \ela$, the Hessian $\operatorname{Hess}\ela$ and Riera's gradient pairing formula also have the $e^{-2d(\ ,\ )}$ sum form.  For the Green's function the sum is over the uniformization group $\Gamma$, while for $\grad\ela$ and $\operatorname{Hess}\ela$ the sum is over one-sided cosets and for Riera's formula the sum is over two-sided cosets.  A classical argument for estimating such sums starts with the observation that the summand satisfies a mean value bound.  Then the sum is naturally bounded by the integral over the $\Gamma$-orbit of a metric ball.  We show that the resulting estimate is not optimal for surfaces with small length geodesics.  

Our purpose is to give an efficient bound for the effect of small geodesic-lengths.  We show the effect on the group sum of $e^{-2d(p,q )}$ for separation of the  points by a collar of length $\ela$ is $\ela^3$ and separation of the points by a half collar is $\ela$.  We use the Dehn parameterization of isotopy classes to analyze the effect of crossing a collar on the lengths of paths between $p$ and the $\Gamma$-orbit of a point $q$. 

A \emph{multicurve} is a disjoint union of homotopically non trivial simple closed curves.  Dehn showed for a surface of topological type $(g,n)$ that multicurves modulo free homotopy are parameterized bijectively by $\mathZ^{6g-6+2n}$.   The parameterization is based on choosing a pair of pants decomposition, defining disjoint annular neighborhoods for each pants curve in the decomposition and  defining \emph{windows}, a closed interval on each boundary of an annular neighborhood.  A multicurve can be isotoped to a standard form so that it intersects each annular neighborhood efficiently (no embedded bigons) and only intersects boundaries of annular neighborhoods in windows.  In the complement of the annular neighborhoods, representatives are specified for the relative isotopy classes of arcs between windows. There are only a finite number of possible isotopy classes of such arcs.  In general a multicurve in standard form consists of representative arcs between windows, simple closed curves isotopic to pants curves and arcs inside annular neighborhoods connecting windows.  The third type arcs may wind (twist) positively or negatively around the annulus.  Dehn's coordinates are based on the number of arcs of each type in the standard form and the twisting numbers of arcs of the third type \cite[\S 1.2, Theorem 1.2.1]{HP}.  The core of the result is that the standard form for isotopy classes of multicurves count of arcs and twisting numbers are intrinsic parameters.  The result is based on isotopy to a standard form.  Dehn's approach extends to describing the isotopy classes of simple arcs between two fixed points.  We are only interested in connected simple arcs and so the parameters give an injection to $\mathZ^{6g-6+2n}$.   

We consider the upper-half plane $\mathH$ with hyperbolic distance $d(\ ,\ )$.  Consider $\Gamma$ the uniformization group of a surface of topological type $(g,n)$.  We are interested in the group sum
\[
K(p,q)=\sum_{\gamma\in\Gamma}e^{-2d(p,\gamma q)}
\] 
for $p$ and $q$ in $\mathH$.  Our approach for considering the sum is to consider the paths from $p$ to $q$ grouped into families.  The elements within a family are enumerated by varying the Dehn annular twist numbers over $\mathZ$.  

We give a lower bound for the exponential-distance sum in terms of the cube of the product of small geodesic-lengths. The width of a collar is $2\log 2/\ell + O(1)$ and so the square of a geodesic-length is the exponential of the negative collar width.  Similarly the square product of small geodesic-lengths provides a lower bound for the exponential negative of the surface diameter.

\begin{proposition}\label{Ksum}
	There is a positive constant $C''$ depending on topological type, such that for neither $p$ or $q$ in cusp regions
	\[
	K(p,q)\,\ge\,C'' \sigma^3,
	\]
	where $\sigma$ is the product of small geodesic-lengths.  
\end{proposition}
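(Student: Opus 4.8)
The plan is to bound $K(p,q)$ from below by the contribution of a single, carefully chosen family of group elements, exploiting the positivity of every summand. First I would fix the shortest geodesic arc $\eta$ from $p$ to $q$ on $R=\mathH/\Gamma$; since neither endpoint lies in a cusp region, its length is controlled by the diameter estimate of the Preliminaries, $d_R(p,q)\le\sum_\alpha 2w(\alpha)+O(1)$, the sum over the short geodesics $\alpha_1,\dots,\alpha_k$ whose collars $\eta$ crosses. A single lift of $\eta$ already gives $K(p,q)\ge e^{-2d_R(p,q)}\ge c_0\prod_j\ell_{\alpha_j}^4$, but this only yields the exponent $4$. The extra power per collar must come from summing over the Dehn twisting numbers.

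Second, I would isolate the one-collar mechanism as the main lemma. Writing $T\colon z\mapsto e^{\ela}z$ for the deck transformation of a collar and placing $p,q$ near its two boundaries, the identity
\[
\cosh d(p,e^{n\ela}q)=A\,e^{-n\ela}+B\,e^{n\ela}+c_0,\qquad A=\tfrac{|p|^2}{2\Im p\,\Im q},\ B=\tfrac{|q|^2}{2\Im p\,\Im q},
\]
shows that, after recentering at the minimizing index, $\cosh d(p,T^nq)\approx M\cosh(m\ela)$ with $M=\cosh\delta$ and $\delta\approx 2w(\alpha)$ the crossing distance. Hence $e^{-2d}\approx\bigl(4M^2\cosh^2(m\ela)\bigr)^{-1}$, and comparing the sum over $m\in\mathZ$ with the integral $\ela^{-1}\int_{\mathR}\operatorname{sech}^2 x\,dx$ gives
\[
\sum_{n\in\mathZ}e^{-2d(p,T^nq)}\ \ge\ \frac{c}{\ela\,M^2}\ \ge\ c'\ela^{3},
\]
using $M\le C\ela^{-2}$ for a full crossing; for a half crossing $M\le C\ela^{-1}$ gives the bound $c'\ela$ instead. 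The gain of a full power of $\ela$ is exactly the factor $\ela^{-1}$ produced by the $O(\ela^{-1})$ twisting numbers with $\cosh(m\ela)=O(1)$. This estimate, with uniform constants, I expect to be the technical heart of the argument.

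Third, I would assemble the collars multiplicatively. Let $\gamma(n_1,\dots,n_k)\in\Gamma$ be the element whose associated arc is $\eta$ twisted $n_j$ times across collar $\alpha_j$; by the injectivity of the Dehn parameterization of simple arcs these group elements are distinct, so they index disjoint summands of $K(p,q)$. Concatenating, for each twist vector, the geodesic segments crossing the successive collars with the prescribed windings together with the intervening thick-region arcs (of uniformly bounded length, since thick pieces and collar boundaries have bounded diameter) yields a path in the correct class, whence $d(\tilde p,\gamma(n_1,\dots,n_k)\tilde q)\le\sum_{j=1}^k\delta_j(n_j)+C_1$, where $\delta_j(n_j)=d(P_j,T_j^{n_j}Q_j)$ is the crossing distance of $\alpha_j$ and $C_1$ depends only on topological type. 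Because $\delta_j(n_j)$ depends only on $n_j$, the resulting sum factorizes:
\[
K(p,q)\ \ge\ \sum_{(n_1,\dots,n_k)}e^{-2d(\tilde p,\gamma\tilde q)}\ \ge\ e^{-2C_1}\prod_{j=1}^k\Bigl(\sum_{n_j\in\mathZ}e^{-2\delta_j(n_j)}\Bigr)\ \ge\ e^{-2C_1}\prod_{j=1}^k c'\,\ell_{\alpha_j}^{3}.
\]

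Finally, since the crossed geodesics $\{\alpha_j\}$ form a subset of all short geodesics and each length is $<1$, one has $\prod_j\ell_{\alpha_j}^3\ge\sigma^3$; absorbing the bounded number $k\le 3g-3+n$ of uniform constants $c'$ together with $e^{-2C_1}$ into a single $C''$ depending only on topological type gives $K(p,q)\ge C''\sigma^3$. The degenerate cases — no short geodesic crossed, or $\sigma=1$ — are immediate from $K(p,q)\ge e^{-2d_R(p,q)}$ and $\sigma\le1$. The main obstacle I anticipate is making the second and third steps uniform and rigorous simultaneously: controlling the crossing distances $\delta_j(n_j)$ and the injectivity and standard-form bookkeeping of the twist family well enough that the single-collar $\ela^{-1}$ enhancement survives the passage to several collars with constants depending only on $(g,n)$.
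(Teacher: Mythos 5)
Your proposal is correct and follows essentially the same route as the paper: the shortest geodesic $\eta_{pq}$ with length bounded by the sum of collar widths plus a topological constant, a $\mathZ$-family of twistings across each crossed collar whose distinctness is guaranteed by Dehn's parameterization, a per-collar contribution of order $\ell_{\alpha}^3$ (the $e^{-4w(\alpha)}\approx\ell_{\alpha}^4$ crossing cost enhanced by the $\approx\ell_{\alpha}^{-1}$ twisting sum), and multiplicative assembly over the collars. The only difference is cosmetic: you obtain the single-collar estimate from the exact identity $\cosh d(p,T^nq)=Ae^{-n\ela}+Be^{n\ela}+c_0$ and a $\operatorname{sech}^2$ integral comparison, where the paper uses comparison arcs (boundary-to-core geodesics plus $|n|$ circuits) and the Riemann-sum bound $\sum_n e^{-2(2w(\alpha)+|n|\ela)}\asymp\ela^3$.
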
    
\begin{proof}
	We begin by considering the twisting family of simple arcs crossing a single collar.  We give upper bounds for the lengths of the arcs crossing a collar $\mathcal C$ about a geodesic of length $\ela$.  The comparison arcs are given by concatenations of a minimal geodesic from one boundary of $\mathcal C$ to $\alpha$, circuits about $\alpha$ and a minimal geodesic from $\alpha$ to the second boundary of $\mathcal C$.  The exponential length sum is
	\begin{equation}\label{lengthsum}
	\sum_{n\in\mathZ}e^{-2(2w(\alpha)+|n|\ela)}.
	\end{equation}
Substituting the expansion $w(\alpha)=\log 1/\ela +O(1)$ for the collar half-width, the Riemann sum compares to and is bounded by the integral
\[
C\ela^3\int^{\infty}_0 e^{-2x}dx.
\]
In our overall argument a comparison arc will connect specified points on the boundaries of a collar. This endpoint condition can be satisfied by including a fraction of a circuit about $\alpha$.   The modification will increase the length of the comparison arc by at most $\ela$ and will decrease the bound by a factor of $e^{-\ela}$.  In summary the lower bound is $\ela^3$.

The kernel $K(p,q)$ is a sum of positive terms.  To estimate the kernel from below we only consider simple paths between $p$ and $q$ that cross each collar at most once.   First we adjust the size of the collars to ensure that a simple geodesic does not enter and leave a collar by crossing the same boundary.   For cusp regions of unit area, a simple geodesic cannot enter the cusp sub region of area one-half.  A sequence of collars with core lengths tending to zero converges in the compact-open topology for metric spaces to a pair of cusp regions.   It follows that for sufficiently small core lengths, simple geodesics do not enter and leave collars of area less than unity by crossing the same boundary.  This is the setting we consider for bounding the kernel.  We use these values for defining thick-thin decompositions. The number of and diameters of thick regions are appropriately bounded. For neither $p$ or $q$ in cusp regions, then a simple geodesic connecting $p$ and $q$ consists only of segments in thick regions, segments crossing collars and segments entering or leaving a collar if $p$ or $q$ lie in a collar.  We consider $\eta_{pq}$ the shortest geodesic connecting $p$ to $q$.  The geodesic crosses each collar at most once and consequently crosses thick regions a number of times bounded by the surface topology.  In the special case that $p$ and $q$ lie in a common collar, then the length of the segments of $\eta_{pq}$ in the collar is at most half the collar width.  Since the thick regions have bounded diameters, the length of $\eta_{pq}$ is bounded by the sum of the collar widths plus a constant depending only on the topology. 

There is a multi twisting family of arcs from $p$ to $q$ starting with the geodesic $\eta_{pq}$.  For each collar that $\eta_{pq}$ crosses there is a $\mathZ$-fold collection of twistings.  By Dehn's theorem each multi twisting represents a distinct isotopy class.  As above, the contribution to a lower bound for $K(p,q)$ for a collar about a geodesic is a factor of $C\ela^3$.  The overall estimate follows.
\end{proof}

As noted, the Green's function for the operator $\Delta=\Delt$ is given by the uniformization group sum
\[
G(p,q)=\sum_{\gamma\in\Gamma} -2Q_1(d(p,\gamma q)),
\]
for $Q_1$ an associated Legendre function and $d(\ ,\ )$ hyperbolic distance on $\mathH$.   The positive function $-Q_1$ has a logarithmic singularity at the origin and satisfies $-Q_1\approx e^{-2d(\ ,\ )}$ at large distance, \cite{Fay}.   

\begin{corollary}\label{Gsum}
	There is a positive constant depending on topological type, such that for neither $p$ or $q$ in cusp regions 
	\[
	G(p,q)\ge C''\sigma^3,
	\]
	where $\sigma$ is the product of small geodesic-lengths.  
\end{corollary}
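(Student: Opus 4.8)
The plan is to deduce the corollary directly from Proposition \ref{Ksum} by a termwise comparison of the two uniformization group sums. Both $G(p,q)$ and $K(p,q)$ are sums over $\Gamma$ of a positive radial function of the hyperbolic distance $d(p,\gamma q)$: for $G$ the summand is $-2Q_1(d)$ and for $K$ it is $e^{-2d}$. Since the geodesic-length product lower bound is already secured for $K(p,q)$, it suffices to show that each summand of $G$ dominates a fixed positive multiple of the corresponding summand of $K$.

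Concretely, I would first establish the pointwise inequality
\[
-2Q_1(t)\,\ge\,c\,e^{-2t}\qquad\text{for all } t>0,
\]
for some positive constant $c$. This follows from the two stated properties of $Q_1$. The positive function $-Q_1$ has a logarithmic singularity at the origin, so the ratio $-2Q_1(t)/e^{-2t}$ tends to $+\infty$ as $t\to 0^+$; and $-Q_1\approx e^{-2t}$ at large distance, so the ratio tends to a positive limit as $t\to\infty$. The ratio is continuous and strictly positive on $(0,\infty)$, and having positive limits at both ends it is bounded below by a positive constant $c$ on the whole half-line.

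With the pointwise bound in hand the comparison is immediate and termwise:
\[
G(p,q)\,=\,\sum_{\gamma\in\Gamma} -2Q_1(d(p,\gamma q))\,\ge\,c\sum_{\gamma\in\Gamma} e^{-2d(p,\gamma q)}\,=\,c\,K(p,q).
\]
Applying Proposition \ref{Ksum}, for neither $p$ nor $q$ in cusp regions we obtain $G(p,q)\ge c\,C''\sigma^3$, and the corollary holds with the constant $cC''$.

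Since the argument is essentially a reduction, there is no substantive obstacle once the pointwise domination of $-2Q_1$ over $e^{-2t}$ is secured. The only point requiring care is the uniformity of the constant $c$ across the full range of distances: this is exactly where the logarithmic singularity at the origin controls the small-distance regime and the exponential asymptotic controls the large-distance regime, while continuity and positivity of $-Q_1$ handle the intermediate range.
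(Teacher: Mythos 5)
Your proof is correct and is essentially the paper's own argument: the paper likewise deduces the corollary from Proposition \ref{Ksum} by noting that the behavior of $-Q_1$ at zero (logarithmic singularity) and at infinity ($-Q_1\approx e^{-2d}$) forces the pointwise bound $-2Q_1\ge c\,e^{-2d}$, hence $G\ge cK$. You have merely spelled out the compactness/limit argument for the uniform constant $c$ that the paper leaves implicit.
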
	
\begin{proof}
	Given the behavior of $-Q_1$ at zero and infinity, the function  is bounded below by a positive multiple of $e^{-2d(\ ,\ )}$.  The proposition provides the conclusion.  
	
\end{proof}

To bound the kernels $G$ or $K$ from above involves bounding the lengths of paths from below and accounting for the exponential growth of the number of geodesic paths.  We now observe that the above comparison arcs for the twisting family of simple arcs crossing a collar give an upper bound of the same magnitude $\ell^3$. Hyperbolic trigonometry provides the necessary bounds.  For a hyperbolic right triangle with side lengths $a, b$ and $c$, with $c$ opposite the right angle then $\cosh c =\cosh a \cosh b$. 

Noting that $\log \cosh x= x+O(1)$, we have the length relation $c=a+b+O(1)$. Consider in $\mathH$ the configuration of points $p$, $q$ on opposite sides of a complete geodesic $\alpha$ with $\hat p$, $\hat q$ the projections of the points to the geodesic.   Consider that the geodesic $\stackrel{\frown}{pq}$ from $p$ to $q$ intersects $\alpha$ at a point $r$. In the proof of the proposition we bounded the length $\stackrel{\frown}{pq}$ by the sum of lengths of $\stackrel{\frown}{p\hat p}$, $\stackrel{\frown}{\hat p\hat q}$ and $\stackrel{\frown}{\hat q q}$. The segment $\stackrel{\frown}{\hat p\hat q}$ covers the circuits about $\alpha$.  We refer to the right triangles $p\hat pr$ and $q\hat q r$ and consider $p,q$ on the boundary of a collar about $\alpha$.  Referring to the relation of triangle side lengths, we have that the length of $\stackrel{\frown}{pq}$ equals the $\alpha$ collar width plus the length of circuits plus a bounded remainder.  It follows that provided the circuit segment is longer than a positive threshold, then $\ell(\stackrel{\frown}{pq})$ is greater than the sum of the collar width and a positive multiple (less than unity) of the circuit length.  The resulting upper bound for the exponential length sum is
\[
\sum_{|n|\ell\ge c}e^{-2(w(\alpha)+c'|n|\ell)} .
\] 
Substituting the expansion for the collar half-width, the Riemann sum compares to and is bounded by the integral
\[
C\ela^3 \int_ce^{-2c'x}dx,  
\]
matching the lower bound for the sum.  

The present analysis of paths can be applied to understand the convexity of geodesic-length functions for surfaces with small geodesic-lengths.  For a geodesic $\alpha$ with lift $\tilde\alpha$ to $\mathbb{H}$, and corresponding deck transformation $A$ stabilizing $\tilde\alpha$, consider the exponential-distance coset sum
\[
P_{\alpha}(p)\,=\,\sum_{\gamma\in\langle A\rangle\backslash\Gamma}e^{-2d(\tilde\alpha,\gamma p)}.
\] 
The analysis of twisting families of arcs can be applied to determine the magnitude of $P_{\alpha}$ on components of the thick-thin decomposition.   In \cite[Theorem 3.11]{Wlbhv}, we showed that the Hessian of $\ela$ satisfies
\[
\langle\mu,P_{\alpha}\mu\rangle\,\le\,3\pi\operatorname{Hess}\ela[\mu,\mu]\,\le\,48\langle\mu,P_{\alpha}\mu\rangle,
\]   
for $\mu$ a harmonic Beltrami differential. Magnitude information for $P_{\alpha}$ combines with magnitude information for $\mu$ to provide bounds the Hessian for a surface with small geodesic-lengths.    

\section{Analyzing Weil-Petersson sectional curvature.}

We begin with the formulas of \cite[\S 4.]{Wlchern} for the Weil-Petersson metric and curvature tensor.   For the uniformization $\mathH/\Gamma$ of a finite hyperbolic area surface, the deformation holomorphic tangent space is the space of harmonic Beltrami differentials $B(\Gamma)$. The Weil-Petersson metric is $ds^2=2\sum g_{\alpha\bar\beta}dt_{\alpha}\overline{dt_{\beta}}$ for
\[g_{\alpha\bar\beta}\,=\,\langle\mu_{\alpha},\mu_{\beta}\rangle\,=\,\int_{\mathH/\Gamma}\mu_{\alpha}\overline{\mu_{\beta}}\,dA,
\]
for $\mu_{\alpha}, \mu_{\beta}\in B(\Gamma)$ and $dA$ the hyperbolic area element.   We will also write $\langle\ ,\ \rangle$ for the Hermitian product on complex functions.   The operator $\Delta$ is represented by the Green's function integral
\[
\Delta f(p)\,=\,\int_{\mathH/\Gamma}G(p,q)f(q)\,dA.
\]
To simplify notation for functions $f$ and $h$, we write
\[
(f,h)\,=\,\int_{\mathH/\Gamma} f(p)\Delta h(q)\,dA,
\]
where $(\ ,\ )$ is a complex bilinear pairing.  A product of Beltrami differentials $\mu\overline{\nu}$ is a function and the curvature considerations involve the products
\[
(\alpha\bar\beta,\gamma\bar\delta)\,=\,\langle\Delta(\mu_{\alpha}\overline{\mu_{\beta}}),(\mu_{\gamma}\overline{\mu_{\delta}})\rangle\,=\,\langle(\mu_{\alpha}\overline{\mu_{\beta}}),\Delta(\mu_{\gamma}\overline{\mu_{\delta}})\rangle.
\]
The Weil-Petersson Riemann tensor is
\[
R_{\alpha\bar\beta\gamma\bar\delta}\,=\,(\alpha\bar\beta,\gamma\bar\delta)\,+\,(\alpha\bar\delta,\gamma\bar\beta), 
\]
\cite[Theorem 4.2]{Wlchern}.

We review Bochner's description \cite[Formulas 24 and 25]{Boch} of sectional curvature and the considerations of \cite[Theorem 4.5]{Wlchern}.  Given holomorphic tangent vectors $\tau_1, \tau_2$, associate the real tangent vectors $v_1=\tau_1+\overline{\tau_2},v_2=\tau_2+\overline{\tau_2}\in \mathbf{T}_{\mathR}\mathcal T(\Gamma)$.  Bochner shows that the curvature of the section spanned by $v_1$ and $v_2$ is $\bR/\bg$ where
\[
\bR\,=\,R_{1\bar21\bar2}-R_{1\bar22\bar1}-R_{2\bar11\bar2}+R_{2\bar12\bar1}
\]
and
\[
\bg\,=\,4g_{1\bar1}g_{2\bar2}-2|g_{1\bar2}|^2\,-\,2\Re (g_{1\bar2})^2.
\]

We represent the tangent vectors $\tau_1,\tau_2$ by Beltrami differentials $\mu_1,\mu_2\in B(\Gamma)$.  Sectional curvature depends on the $2$-plane spanned by the pair of vectors.  We normalize that $\mu_1,\mu_2$ are orthonormal.  The denominator $\bg$ then equals $4$. 

Starting from the earlier considerations, we have that
\begin{equation}\label{Rform}
\bR\,=\,4\Re(1\bar2,1\bar2)-2(1\bar2,2\bar1)-2(1\bar1,2\bar2).
\end{equation} 
We proceed to provide lower bounds for combinations of terms of $-\bR$.  We begin by writing $\mu_1\overline{\mu_2}=f+ih$ for the decomposition of the product into real and imaginary parts and writing $f=f^+-f^-$ for the decomposition of the real part into positive and strictly negative parts. We first consider the difference
\[
(\mu_1\overline{\mu_2},\mu_2\overline{\mu_1})-\Re(\mu_1\overline{\mu_2},\mu_1\overline{\mu_2})\,=\,(f,f)+(h,h)-(f,f)+(h,h)\,=\,2(h,h)
\]
and note that $(h,h)$ is non negative by the spectral decomposition of $\Delta$.  (In fact $h$ is non trivial since $\mu_1$ and $\mu_2$ are linearly independent.)   Next we consider a H\"{o}lder inequality for the operator $\Delta$.  The kernel $G$ is positive and has a positive square root.  For bounded functions $u,v$, we can write $G|uv|=G^{1/2}|u|G^{1/2}|v|$ and apply the H\"{o}lder inequality to find
\[
\big|\int Guv\,dA\big|\,\le\,\int G|uv|\,dA\,\le\,\Big(\int Gu^2\,dA\Big)^{1/2}\Big(\int Gv^2\,dA\Big)^{1/2}.
\]
The overall analysis of $-\bR$ focuses on the following sequence of inequalities
\begin{multline}\label{mainineq}
\Re(\mu_1\overline{\mu_2},\mu_1\overline{\mu_2})\,\le\,(|\Re\mu_1\overline{\mu_2}|,|\Re\mu_1\overline{\mu_2}|)\,\le\,(|\Re\mu_1\overline{\mu_2}|,|\mu_1\overline{\mu_2}|)\,\le\\ \int|\Re\mu_1\overline{\mu_2}|(\Delta|\mu_1|^2)^{1/2}(\
\Delta|\mu_2|^2)^{1/2}\,dA\,\le\\ \int|\mu_1\overline{\mu_2}|(\Delta|\mu_1|^2)^{1/2}(\
\Delta|\mu_2|^2)^{1/2}\,dA\,\le \\
\Big(\int|\mu_1|^2\Delta|\mu_2|^2\,dA\Big)^{1/2}\Big(\int|\mu_2|^2\Delta|\mu_1|^2\,dA\Big)^{1/2},
\end{multline}
where the first and second inequalities follow since $\Delta$ is positive, the third inequality follows from the H\"{o}lder inequality for $\Delta$ and the fifth inequality is a second application of H\"{o}lder's inequality. The final quantity equals $(\mu_1\overline{\mu_1},\mu_2\overline{\mu_2})$ since $\Delta$ is self-adjoint.  Our analysis will be based on two instances of the observation that in a sequence of inequalities the difference of any two entries is a lower bound for the difference of the first and last entries.  The first and last entries of (\ref{mainineq}) are terms of (\ref{Rform}).  We are ready to present the main result.
\begin{theorem}\label{main}
	There is a positive constant $C^*$ depending on topological type, such that the Weil-Petersson sectional curvature is at most $-C^*\sigma^7$ for $\sigma$ the product of small geodesic-lengths.
\end{theorem}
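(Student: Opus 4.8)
The plan is to bound $-\bR$ below, since with $\mu_1,\mu_2$ orthonormal the denominator is $\bg=4$ and the sectional curvature equals $\bR/4$. First I would assemble the algebraic reduction the text has prepared. Writing $f=\Re\mu_1\overline{\mu_2}$ and $h=\Im\mu_1\overline{\mu_2}$, the identity $(\mu_1\overline{\mu_2},\mu_2\overline{\mu_1})-\Re(\mu_1\overline{\mu_2},\mu_1\overline{\mu_2})=2(h,h)$ together with (\ref{Rform}) and the fact that the last entry of (\ref{mainineq}) is $(\mu_1\overline{\mu_1},\mu_2\overline{\mu_2})$ gives
\[
-\bR=4(h,h)+2\big[(\mu_1\overline{\mu_1},\mu_2\overline{\mu_2})-\Re(\mu_1\overline{\mu_2},\mu_1\overline{\mu_2})\big].
\]
Both bracketed terms are non-negative, so the remaining work is purely quantitative.

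Second, I would turn Corollary \ref{Gsum} into a pointwise lower bound for the potentials $\Delta|\mu_i|^2$. The cusp estimate (\ref{Sch}) shows a unit-norm harmonic Beltrami differential decays into each cusp, so choosing the cusp regions of a fixed depth (depending only on topological type) forces the mass in the complementary region $\Omega$ to exceed a positive constant $c_0$. Then for $p\in\Omega$,
\[
\Delta|\mu_i|^2(p)=\int G(p,q)|\mu_i|^2(q)\,dA\ \ge\ C''\sigma^3\int_\Omega|\mu_i|^2\,dA\ \ge\ C''c_0\,\sigma^3 .
\]

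Third, I would extract the Hölder defect. Every intermediate step of (\ref{mainineq}) increases, so the gap between its fourth and fifth entries bounds the gap between the first and last; hence
\[
(\mu_1\overline{\mu_1},\mu_2\overline{\mu_2})-\Re(\mu_1\overline{\mu_2},\mu_1\overline{\mu_2})\ \ge\ \int\big(\sqrt{f^2+h^2}-|f|\big)(\Delta|\mu_1|^2)^{1/2}(\Delta|\mu_2|^2)^{1/2}\,dA .
\]
Restricting to the thick region, where the injectivity radius is bounded below and (\ref{mvi}) gives $|\mu_1\overline{\mu_2}|\le M$, I would use $\sqrt{f^2+h^2}-|f|\ge h^2/(2M)$ and the potential bound to conclude $-\bR\ge C_1\sigma^3\int_{\mathrm{thick}}h^2\,dA$, while also keeping $-\bR\ge 4(h,h)$ in reserve.

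The main obstacle is the final input: a uniform lower bound $\int_{\mathrm{thick}}h^2\,dA\ge c\,\sigma^4$. This is the quantitative non-proportionality of the orthonormal pair, since $h\equiv0$ would force $\mu_2$ to be a real multiple of $\mu_1$; the content is to show the pointwise failure of proportionality survives on the thick region at the rate $\sigma^4$. Here the mean value inequality (\ref{mvi}) limits how sharply a unit differential can concentrate inside a collar and therefore guarantees a thick-region presence; quantifying this presence in $\ela$ by the path-length comparisons behind Proposition \ref{Ksum}, and using orthonormality to prevent the two differentials from being proportional there, supplies the two factors whose product is $\sigma^4$. Combined with the Green's factor $\sigma^3$ this yields $-\bR\ge C^*\sigma^7$. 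The step is visibly lossy --- the separate estimates $-\bR\ge4(h,h)$ and $-\bR\ge C_1\sigma^3\int_{\mathrm{thick}}h^2$ are each far sharper in the standard examples --- which is the source of the gap between the proven exponent $7$ and the expected exponent $3$.
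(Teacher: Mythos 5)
Your scaffolding (the identity $-\bR=4(h,h)+2\bigl[(\mu_1\overline{\mu_1},\mu_2\overline{\mu_2})-\Re(\mu_1\overline{\mu_2},\mu_1\overline{\mu_2})\bigr]$, the pointwise lower bound $\Delta|\mu_i|^2\ge C''c_0\sigma^3$ away from cusps, and the fourth-fifth gap $\int(\sqrt{f^2+h^2}-|f|)(\Delta|\mu_1|^2)^{1/2}(\Delta|\mu_2|^2)^{1/2}\,dA$) is correct and parallels the paper. But the load-bearing final input, the uniform bound $\int_{\mathrm{thick}}h^2\,dA\ge c\,\sigma^4$, is a genuine gap: it is asserted, not proved, and the tools you cite cannot deliver it. Orthonormality gives only the single scalar constraint $\int\mu_1\overline{\mu_2}\,dA=0$; it rules out $h\equiv0$ (a real-valued ratio of holomorphic quadratic differentials is constant), but that is a non-quantitative statement whose compactness constant on a fixed surface degenerates with the surface. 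Worse, your theorem must cover all $2$-planes: replacing $\mu_2$ by $e^{i\theta}\mu_2$ spans a different real $2$-plane and rotates $h_\theta=h\cos\theta-f\sin\theta$, so for sections where the phase of $\mu_1\overline{\mu_2}$ is nearly constant on the set carrying its mass, $\int_{\mathrm{thick}}h_\theta^2$ can be made far smaller than $\sigma^4$. Since every lower bound in your argument funnels through $h$ (either $4(h,h)$ or $\sigma^3\int_{\mathrm{thick}}h^2$), such sections defeat the proposal outright.

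The paper's proof needs no lower bound on $h$; it runs a dichotomy on the gaps of (\ref{mainineq}) and exploits exactly the term you discard, namely $(f^+,f^-)$. Either the first-fifth gap already exceeds $C'''\sigma^7$ (done), or it is smaller, in which case both $4(f^+,f^-)+(h,h)\le C'''\sigma^7$ and the fourth-fifth gap is $\le C'''\sigma^7$. The first inequality, combined with Corollary \ref{Gsum} and the orthogonality identity $\int f^+\,dA=\int f^-\,dA$ (equal positive and negative mass of $f$), forces $\int|\widetilde f|\,dA\lesssim\sigma^2$; the second, after dividing by the potential lower bound $\sigma^3$, forces $\int\bigl(|\widetilde{\mu_1\overline{\mu_2}}|-|\Re\widetilde{\mu_1\overline{\mu_2}}|\bigr)dA\lesssim\sigma$. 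Together these yield $\|\mu_1\overline{\mu_2}\|_1\lesssim\sigma^2$, the mean value inequality (\ref{mvi}) upgrades this to the pointwise bound $\Delta|f|\lesssim\sigma$, and hence the second entry satisfies $(|f|,|f|)\le 16\,C'C'''/C''\,\sigma^3$, while the last entry satisfies $(|\mu_1|^2,|\mu_2|^2)\ge\frac{49}{64}C''\sigma^3$; with $(C'')^2>22\,C'C'''$ the first-last gap is a definite multiple of $\sigma^3$, so $-\bR\gtrsim\sigma^7$ in either branch. In short: in the dangerous nearly-flat regime the paper proves the product $\mu_1\overline{\mu_2}$ is $L^1$-small and then wins because the last entry couples the full masses of $|\mu_1|^2$ and $|\mu_2|^2$ through the Green's function at rate $\sigma^3$, whereas your route tries to prove the product's imaginary part is uniformly large, which is false. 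To repair your proposal you would need to replace the $h$-bound by this $L^1$-smallness mechanism, i.e., essentially reproduce the paper's argument.
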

\begin{proof} Sectional curvatures are negative for the moduli space of Riemann surfaces. By Mumford's compactness theorem it suffices to establish a bound for surfaces with sufficiently small geodesic-lengths. We begin by setting values for constants.  We  use inequality (\ref{Sch}) to  modify the definition of cusp regions, to provide a cusp-norm inequality for holomorphic quartic differentials $\int_{cusps}|\varphi(ds^2)^{-2}|\,dA\le 1/8\|\varphi\|_1$. We use this inequality when applying Corollary \ref{Gsum} to bound by integrals over the complement of cusp regions.  We use the modified cusp region definition throughout the following discussion.  
Let $C'$ be the constant for the mean value inequality (\ref{mvi}) for quartic differentials and $C''$ the constant for Corollary \ref{Gsum} for the modified cusp regions; let $C'''$ be a positive constant such that $(C'')^2>22\,C'C'''$.  For the inequalities (\ref{mainineq}), we show that if the difference of the first and fifth terms is smaller than $C'''\sigma^7$, then the difference of the first and last terms is at least a positive multiple of $\sigma^3$ - in consequence $-\bR$ is at least a positive multiple of $\sigma^7$, the desired conclusion. 

We work with functions truncated to the complement of the modified cusp regions; for a function $k$ we write $\widetilde k$ for its truncation.   
We begin for $\mu_1\overline{\mu_2}=f^+-f^-+ih$ with the difference of the first two terms of (\ref{mainineq}) 
\begin{multline*}
(|\Re\mu_1\overline{\mu_2}|,|\Re\mu_1\overline{\mu_2}|)-\Re(\mu_1\overline{\mu_2},\mu_1\overline{\mu_2})\,=\,\\(f^+,f^+)+2(f^+,f^-)+(f^-,f^-)-(f^+,f^+)+2(f^+,f^-)-(f^-,f^-)+(h,h)
\,=\\4(f^+,f^-)+(h,h)\,\le\,C'''\sigma^7,
\end{multline*}
with $(h,h)$ positive. Since $f^+, f^-$ and $G$ are positive, we have that  $(\widetilde{f^+},\widetilde{f^-})\le(f^+,f^-)$ and by Corollary \ref{Gsum} that 
\[
4C''\sigma^3\int \widetilde{f^+}\,dA\int \widetilde{f^-}\,dA\,\le\,4(\widetilde{f^+},\widetilde{f^-})\,<\,C'''\sigma^7.
\]
Now since $\mu_1$ and $\mu_2$ are orthogonal, $\Re\int\mu_1\overline{\mu_2}\,dA=0$ and consequently $\int f^+\,dA=\int f^-\,dA$.  Observing next that 
\[
\Big|2\int \widetilde{f^+}\,dA - \int |\widetilde{f}|\,dA\Big| \quad\mbox{and}\quad \Big|2\int \widetilde{f^-}\,dA-\int |\widetilde{f}|\,dA\Big|
\] 
are each bounded by $1/8\|\mu_1\overline{\mu_2}\|_1$, we 
write the resulting inequality as
\begin{equation}\label{intbound}
C''\Big(\int|\widetilde{f}|\,dA\,\pm\,\frac18\|\mu_1\overline{\mu_2}\|_1\Big)^2\,<\,C'''\sigma^4,
\end{equation}
where $\pm$ denotes adding or subtracting a term no larger than $1/8\|\mu_1\overline{\mu_2}\|_1$.     

Next we consider the difference of the fourth and fifth terms of (\ref{mainineq}); again we may truncate since terms are positive
\[
\int \big(|\widetilde{\mu_1\overline{\mu_2}}|-|\Re\widetilde{\mu_1\overline{\mu_2}}|\big)(\Delta|\widetilde{\mu_1}|^2)^{1/2}(\
\Delta|\widetilde{\mu_2}|^2)^{1/2}\,dA\,\le\,C'''\sigma^7.
\]
We apply Corollary \ref{Gsum} to observe that $\Delta|\widetilde{\mu_1}|^2$ and $\Delta|\widetilde{\mu_2}|^2$ are pointwise bounded below by $C''\sigma^3\int|\widetilde{\mu_1}|^2\,dA$ and $C''\sigma^3\int|\widetilde{\mu_2}|^2\,dA$.  Each differential has unit norm and so applying the cusp-norm inequality, the above inequality gives
\[
\frac78 C''\int |\widetilde{\mu_1\overline{\mu_2}}|-|\Re\widetilde{\mu_1\overline{\mu_2}}|\,dA\,\le\,C'''\sigma^4,
\]
which we rewrite as
\[
\frac78 \int |\widetilde{\mu_1\overline{\mu_2}}|\,dA\,\le\,\frac78 \int |\Re\widetilde{\mu_1\overline{\mu_2}}|\,dA + C'''/C''\sigma^4.  
\]
Using that $f=\Re\mu_1\overline{\mu_2}$, we substitute (\ref{intbound}), to conclude that 
\[
\frac78 \int|\widetilde{\mu_1\overline{\mu_2}}|\,dA\,\le\,(C'''/C'')^{1/2}\sigma^2  +  C'''/C''\sigma^4+ \frac18\|\mu_1\overline{\mu_2}\|_1. 
\]
We apply the cusp-norm inequality on the left, absorb the norm term onto the left and assume that $C'''/C''\sigma^4<1$; we have the inequality 
\begin{equation}\label{muineq}
\frac{41}{64}\,\|\mu_1\overline{\mu_2}\|_1\,\le\,2(C'''/C'')^{1/2}\sigma^2.
\end{equation}

We estimate $(|f|,|f|)$.  Combine the above inequality and the mean value inequality to find that on the complement of the original cusp regions 
$|\mu_1\overline{\mu_2}|\,\le\,4\,C'(C'''/C'')^{1/2}\rho^{-1}\sigma^2$, for $\rho$ the surface systole and $\sigma$ sufficiently small. Then by inequality (\ref{Sch}), we have that $|\mu_1\overline{\mu_2}|\,\le\,5\,C'(C'''/C'')^{1/2}\sigma^2$ in cusp regions.  Since $|f|\,\le\,|\mu_1\overline{\mu_2}|$ and $\Delta$ has unit norm acting on $C_0$, we have the pointwise bound $\Delta|f|\,\le\,4\,C'(C'''/C'')^{1/2}\sigma$.  Combining with the $L^1$ bound (\ref{muineq}), we conclude that $(|f|,|f|)\le 16\,C'C'''/C''\sigma^3$.  By comparison, by Corollary \ref{Gsum}, we have that $(|\mu_1|^2,|\mu_2|^2)\ge(|\widetilde{\mu_1}|^2,|\widetilde{\mu_2}|^2)\ge 49/64\, C''\sigma^3$.  By choice of the constant $C'''$, the difference of the second and last terms of (\ref{mainineq}) is bounded below by a positive multiple of $\sigma^3$.   The proof is complete.   	
	
\end{proof}

\section{Vanishing rates for three examples.}

We consider three basic examples.  The tangents are given by geodesic-length gradients for simple geodesics that either have small length or bounded length and are contained in a thick region.  Our estimates will only use the absolute values of Beltrami differentials.  The conclusions are valid for $2$-planes spanned by complex multiples of the specified geodesic-length gradients.   

The analysis will be based on the present Corollary \ref{Gsum}, as well as Proposition 6, Corollary 9, Theorem 17 and Corollary 18 of \cite{Wlcurv}. 
In the earlier work and in the examples we write $\mu_{\alpha}$ for the gradient $\grad \ela$.  In the expansion for $\mu_{\alpha}$, we represent the geodesic $\alpha$ as the imaginary axis in $\mathH$.  
Introducing polar coordinates, let $\sin_{\alpha}\theta$ be the restriction of $\sin \theta$ to the collar $\ela\le\theta\le \pi-\ela$ with $\sin_{\alpha}\theta$ vanishing on the collar complement.  For easy reference we gather the earlier results into a single statement.    

\begin{theorem}\label{priorest}
A geodesic-length gradient $\mu_{\alpha}$ satisfies a general bound
$$
|\mu_{\alpha}(p)| \ \mbox{ is }\ O(\operatorname{inj}(p)^{-1}\ela e^{-d(\alpha,p)}).
$$
Geodesic-length gradients $\mu_{\alpha},\mu_{\beta}$ have expansions
$$
|\mu_{\alpha}|\,=\,a_{\alpha}(\alpha)\sin_{\alpha}^2\theta+a_{\alpha}(\beta)\sin_{\beta}^2\theta + O(\ela^2)
$$
and for $\alpha$ and $\beta$ disjoint,
$$
\mu_{\alpha}\overline{\mu_{\beta}}\,=\,a_{\alpha}(\alpha)a_{\beta}(\alpha)\sin_{\alpha}^4\theta + a_{\alpha}(\beta)a_{\beta}(\beta)\sin_{\beta}^4\theta + O(\ela^2\elb^2),
$$
and
$$
2\Delta\mu_{\alpha}\overline{\mu_{\beta}}\,=\,a_{\alpha}(\alpha)a_{\beta}(\alpha)\sin_{\alpha}^2\theta + a_{\alpha}(\beta)a_{\beta}(\beta)\sin_{\beta}^2\theta + O(\ela^2\elb^2),
$$
where the real principal coefficients satisfy $a_{\alpha}(\alpha)=2/\pi +O(\ela^3)$ and $a_{\alpha}(\beta)$ is $O(\ela^2\elb)$.

\noindent The gradient $\mu_{\alpha}$ has the $\beta$ collar expansion
\begin{multline}\label{mucollexp}
|\mu_{\alpha}(z)|\,=\,
a_{\alpha}(\beta)\sin^2\theta\ + \\
O\Big((\ela/\elb)^2\big((\max_{\theta = \ela/2}|\mu_{\alpha}|)e^{-2\pi\theta/\ela}\,+\,(\max_{\theta=\pi-\ela/2}|\mu_{\alpha}|)e^{2\pi(\pi-\theta)/\ela}\big)\sin^2\theta\Big),
\end{multline}
where the maxima are for the individual collar boundaries and on a given collar boundary the contribution from the opposite boundary is exponentially small.  For $c_0$ positive, all remainder terms are uniform for $\ela,\elb\le c_0$.
\end{theorem}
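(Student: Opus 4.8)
Since the statement gathers facts established in \cite{Wlcurv}, the natural route is to invoke the results cited above, Proposition~6, Corollary~9, Theorem~17 and Corollary~18; here I outline the underlying argument. The plan is to start from the representation of the gradient as a relative Poincar\'e series. Placing $\alpha$ on the imaginary axis with stabilizer generated by $A:z\mapsto e^{\ela}z$, the harmonic Beltrami differential $\mu_{\alpha}=\grad\ela$ is, up to the universal normalization, $\tfrac{2}{\pi}\,y^2\sum_{\langle A\rangle\backslash\Gamma}\overline{\gamma^{*}(dz^2/z^2)}$. The identity coset contributes $\tfrac{2}{\pi}\,y^2\overline{z^{-2}}=\tfrac{2}{\pi}\sin^2\theta\,e^{2i\theta}$, of modulus $\tfrac{2}{\pi}\sin^2\theta$; this already exhibits the principal coefficient $a_{\alpha}(\alpha)=2/\pi$ and the $\sin_{\alpha}^2\theta$ profile on the $\alpha$ collar. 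The remaining cosets I would control by the exponential-distance sums of Section~4: their total modulus is bounded by a $\Gamma$-sum of $e^{-2d(\alpha,\gamma p)}$-type terms, which by Proposition~\ref{Ksum} and the per-collar $\ela^3$ decay is $O(\ela^3)$ on the collar and smaller elsewhere, giving the correction $a_{\alpha}(\alpha)=2/\pi+O(\ela^3)$ and the $O(\ela^2)$ remainder. The general pointwise bound follows by combining the mean value inequality (\ref{mvi}) with the $L^1$-mass estimate $\|\mu_{\alpha}\|_1=O(\ela)$ and the concentration of $\mu_{\alpha}$ near $\alpha$, the factor $e^{-d(\alpha,p)}$ recording the decay of the series away from the axis.

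For the expansion of $|\mu_{\alpha}|$ on a disjoint $\beta$ collar, the point is that the contribution of the cosets not stabilizing $\alpha$, restricted to that collar, is again a distance sum whose zero Fourier mode is the $\sin^2\theta$ profile with coefficient $a_{\alpha}(\beta)=O(\ela^2\elb)$, the $\ela^2=e^{-2w(\alpha)}$ factor being the exponential of the $\alpha$ collar width crossed to reach $\beta$. The product expansion then follows by multiplying the two one-differential expansions: the $\alpha$- and $\beta$-collar leading terms yield $a_{\alpha}(\alpha)a_{\beta}(\alpha)\sin_{\alpha}^4\theta$ and $a_{\alpha}(\beta)a_{\beta}(\beta)\sin_{\beta}^4\theta$, while the cross terms $\sin_{\alpha}^2\theta\,\sin_{\beta}^2\theta$ are supported on the small overlap of disjoint collars and, together with the products involving a remainder, are absorbed into $O(\ela^2\elb^2)$; disjointness of $\alpha$ and $\beta$ is precisely what makes the cross terms negligible.

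The identity for $2\Delta\mu_{\alpha}\overline{\mu_{\beta}}$ I would obtain from a single ODE computation on each collar. Writing the collar in Fermi coordinates $(\rho,t)$ about the core geodesic, the metric is $d\rho^2+\cosh^2\rho\,dt^2$ and $D=\partial_\rho^2+\tanh\rho\,\partial_\rho$ on functions of $\rho$ alone; with $\sin^2\theta=\operatorname{sech}^2\rho$ one computes $(D-2)\operatorname{sech}^2\rho=-4\operatorname{sech}^4\rho$, that is $2\Delta\sin^4\theta=\sin^2\theta$ since $\Delta=\Delt$. Applying this termwise to the product expansion replaces each $\sin^4\theta$ profile by the corresponding $\sin^2\theta$ profile, and the boundedness of $\Delta$ (unit norm on $C_0$ and $L^2$) together with Corollary~\ref{Gsum} propagates the $O(\ela^2\elb^2)$ remainder, the truncation of the profiles to the collars producing boundary corrections within the stated remainder order.

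Finally, the collar expansion (\ref{mucollexp}) I would establish by harmonic analysis on the $\beta$ collar: since $\mu_{\alpha}=y^2\overline{\phi}$ with $\phi$ a holomorphic quadratic differential, on the annulus $\phi$ has a Laurent expansion in $\log z$ whose zero mode $c_0\,dz^2/z^2$ gives the $a_{\alpha}(\beta)\sin^2\theta$ term and whose nonzero modes decay like $e^{-2\pi\theta/\ela}$ and $e^{2\pi(\pi-\theta)/\ela}$ from the two boundaries, with amplitudes controlled by the boundary maxima and the $(\ela/\elb)^2$ prefactor. The main obstacle throughout is the \emph{uniform} control of the remainders over the full range of small lengths: one must show that the tails of the Poincar\'e series, the cross-collar coefficients, and the $\Delta$-images of remainder terms are genuinely $O(\ela^2)$ and $O(\ela^2\elb^2)$ independently of the surface, and it is precisely the distance-sum estimates of Section~4 and the collar Fourier decomposition, separating the slowly decaying zero mode from the exponentially small higher modes, that deliver this uniformity.
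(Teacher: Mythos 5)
Your proposal matches the paper: the paper gives no proof of Theorem \ref{priorest} at all, explicitly presenting it as a gathering ``for easy reference'' of Proposition 6, Corollary 9, Theorem 17 and Corollary 18 of \cite{Wlcurv}, which is precisely what you invoke as the primary route. Your supporting sketch --- the relative Poincar\'e series for $\grad\ela$ with identity-coset term $\tfrac{2}{\pi}\sin^2\theta$, the exponential-distance (coset) sums controlling the tail, the Fourier-mode decomposition on a collar separating the slowly varying $\sin^2\theta$ zero mode from the exponentially small higher modes, and the Fermi-coordinate identity $(D-2)\operatorname{sech}^2\rho=-4\operatorname{sech}^4\rho$, hence $2\Delta\sin^4\theta=\sin^2\theta$, which your computation gets right --- is consistent with how the cited work actually establishes these expansions.
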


An immediate consequence of the theorem is that for $\ela$ small, $\mu_{\alpha}$ is mainly supported in the $\alpha$ collar, while for $\beta$ in a thick region, then $\mu_{\beta}$ is mainly supported in that same region. 

The calculation of sectional curvature involves the pairing of vectors.  Since we are bounding sectional curvature away from zero, a concern is that the denominator of $\bR/\bg$ could be large. The norm of geodesic-length gradients is bounded provided the geodesic-lengths are bounded, a hypothesis for our examples.  As noted, if a geodesic-length $\ela$ is small, then the unit-normalized differential is $\nu_{\alpha}=(\pi/2\ela)^{1/2}\mu_{\alpha}$.  In the examples we use the normalized differentials $\nu_{\alpha}$, for $\ela$ small.  Our considerations for the examples show that the geodesic-length gradients are almost orthogonal for small length; this information is not needed since the concern is a large denominator for $\bR/\bg$. 

As a preliminary matter we consider the maximum of $\sin\theta e^{-c\theta/\ell}$ for $c$ positive.   Forming the derivative and equating to zero gives that the maximum occurs for $\tan\theta=\ell/c$, in particular for $\theta\approx \ell/c$.  It follows that the maximum is approximately a constant multiple of $\ell$.   

We are ready to consider the  examples.   

{\emph{A pair of adjacent thick regions.}  Consider gradients $\mu_{\beta_1}$, supported on one thick region and $\mu_{\beta_2}$ supported on a second thick region with the regions connected by the collar for a short geodesic $\alpha$.  By the Theorem \ref{priorest} general bound for gradients the product $\mu_{\beta_1}\overline{\mu_{\beta_2}}$ is $O(\ela^2)$ on the thick regions.  We consider the product on the collar.  The product of principal coefficients $a_{\beta_1}(\alpha)a_{\beta_2}(\alpha)$ is $O(\ela^2)$.  We consider the product of principal and remainder terms using (\ref{mucollexp}).  
The product has maximum occurring for $\theta\approx c\ela$ and so a principal term $a_{\beta}(\alpha)\sin^2\theta$ has magnitude $O(\ela^3)$ and the remainder is at most $O(1)$. Next we consider the product of remainder terms.  On each collar boundary one of $\mu_{\beta_1}, \mu_{\beta_2}$ has magnitude $O(\ela^2)$ by the general bound.  
It follows that the product of remainders is $O(\ela^2)$ on the collar.  In conclusion $\mu_{\beta_1}\overline{\mu_{\beta_2}}$ is $O(\ela^2)$ and since $\Delta$ has unit norm as an operator on $C_0$, we have that $(\mu_{\beta_1}\overline{\mu_{\beta_2}},\overline{\mu_{\beta_1}}\mu_{\beta_2})$ is $O(\ela^4)$.  By Corollary \ref{Gsum}, $(\mu_{\beta_1}\overline{\mu_{\beta_1}},\mu_{\beta_2}\overline{\mu_{\beta_2}})$ is bounded below by a positive multiple of $\ela^3$.   The sectional curvature is at most a negative multiple of $\ela^3$ from formula (\ref{Rform}).

\emph{A collar adjacent to a thick region.}  Consider gradients $\mu_{\alpha}$ for the collar and $\mu_{\beta}$ for the adjacent thick region.  Consider the product $\mu_{\alpha}\overline{\mu_{\beta}}$ with the product of principal coefficients $a_{\alpha}(\alpha)a_{\beta}(\alpha)$ having magnitude $O(\ela)$.  We refer to Theorem \ref{priorest} to analyze
$\mu_{\alpha}\overline{\mu_{\beta}}\Delta\overline{\mu_{\alpha}}\mu_{\beta}$.  The product is $O(\ela^4)$ on the thick region with principal term $a_{\alpha}(\alpha)^2a_{\beta}(\alpha)^2\sin_{\alpha}^6\theta$ in the collar.  In the collar $\mu_{\alpha}\overline{\mu_{\beta}}$ and
$\Delta\mu_{\alpha}\overline{\mu_{\beta}}$ each have remainder terms that are $O(\ela^2)$ and $a_{\alpha}(\alpha)a_{\beta}(\alpha)$ is $O(\ela)$; it follows that in the collar the remainder of the product is $O(\ela^3)$.  The integral over the collar of $\sin^6\theta$ is 
\[
\int_1^{e^{\ela}}\int_{\ela}^{\pi-\ela}\sin^6\theta \frac{dr}{r}\frac{d\theta}{\sin^2\theta}.
\]  
The integral is $O(\ela)$ and the principal term is $O(\ela^3)$.  As noted, for small lengths the normalization of $\mu_{\alpha}$ requires a factor $\ela^{-1/2}$, so the overall bound is that  $(\nu_{\alpha}\overline{\mu_{\beta}},\overline{\nu_{\alpha}}\mu_{\beta})$ is $O(\ela^2)$.  Now consider $|\mu_{\beta}|^2\Delta|\mu_{\alpha}|^2$.  On the thick region it is bounded as $O(\ela^2)$ which is the expected order for its leading term - Theorem \ref{priorest} is not sufficient for the analysis.  Instead we use the approach of Proposition \ref{Ksum} and Corollary \ref{Gsum} based on the shortest path from $\alpha$ to $\beta$.  The earlier analysis applies, only now the shortest path crosses half a collar, rather than an entire collar.  
In the sum (\ref{lengthsum}) replace the collar width $2w(\alpha)$ with the collar half width $w(\alpha)$.  The consequence is that the Green's function is bounded below by a positive multiple of $\ela$.  We have that $(\nu_{\alpha}\overline{\nu_{\alpha}},\mu_{\beta}\overline{\mu_{\beta}})$ is at least a positive multiple of $\ela$ and from above the remaining contributions are $O(\ela^2)$.  The sectional curvature is at most a negative multiple of $\ela$.  

\emph{A pair of collars adjacent to a thick region.}  Consider gradients $\mu_{\alpha_1},\mu_{\alpha_2}$ for a pair of collars adjacent to a thick region.   By symmetry it suffices to consider the $\alpha_1$ collar.  Consider the product $\mu_{\alpha_1}\overline{\mu_{\alpha_2}}$ with the product of principal coefficients $a_{\alpha_1}(\alpha_1)a_{\alpha_2}(\alpha_1)$ having magnitude $O(\ell_{\alpha_1}\ell_{\alpha_2}^2)$.  We refer to Theorem \ref{priorest} to analyze $\mu_{\alpha_1}\overline{\mu_{\alpha_2}}\Delta\overline{\mu_{\alpha_1}}\mu_{\alpha_2}$.  
The product is $O(\ell_{\alpha_1}^4\ell_{\alpha_2}^4)$ on the thick region with principal term $a_{\alpha_1}(\alpha_1)^2a_{\alpha_2}(\alpha_1)^2\sin_{\alpha_1}^6\theta$ in the $\alpha_1$ collar.  In the collar, $\mu_{\alpha_1}\overline{\mu_{\alpha_2}}$ and $\Delta\mu_{\alpha_1}\overline{\mu_{\alpha_2}}$ each have remainders that are $O(\ell_{\alpha_1}^2\ell_{\alpha_2}^2)$  and  $a_{\alpha_1}(\alpha_1)a_{\alpha_2}(\alpha_1)$ is $O(\ell_{\alpha_1}\ell_{\alpha_2}^2)$; it follows that in the collar the remainder of the product is $O(\ell_{\alpha_1}^3\ell_{\alpha_2}^3)$.  
Now the integral of $\sin^6\theta$ over the collar is $O(\ell_{\alpha_1})$, as above.  
So the product $\mu_{\alpha_1}\overline{\mu_{\alpha_2}}\Delta\overline{\mu_{\alpha_1}}\mu_{\alpha_2}$ is $O(\ell_{\alpha_1}^3\ell_{\alpha_2}^3)$ and the product of normalized differentials $\nu_{\alpha_1}\overline{\nu_{\alpha_2}}\Delta\overline{\nu_{\alpha_1}}\nu_{\alpha_2}$ is $O(\ell_{\alpha_1}^2\ell_{\alpha_2}^2)$.  Now consider $|\nu_{\alpha_1}|^2\Delta|\nu_{\alpha_2}|^2$ and again we use the approach of Proposition \ref{Ksum} and Corollary \ref{Gsum} based on the shortest path from $\alpha_1$ to $\alpha_2$.  
The shortest path crosses half the $\alpha_1$ collar and half the $\alpha_2$ collar.  In the sum (\ref{lengthsum}) again the collar width is replaced by the half width. The Green's function is bounded below by a positive multiple of $\ell_{\alpha_1}\ell_{\alpha_2}$.  We have that $(\nu_{\alpha_1}\overline{\nu_{\alpha_1}},\nu_{\alpha_2}\overline{\nu_{\alpha_2}})$ 
is at least a positive $\ell_{\alpha_1}\ell_{\alpha_2}$ and the remaining contributions are $O(\ell_{\alpha_1}^2\ell_{\alpha_2}^2)$.  The sectional curvature is at most a negative multiple of $\ell_{\alpha_1}\ell_{\alpha_2}$.   

Theorem \ref{priorest} provides that Beltrami differentials can be decomposed into components approximately supported on the components of the thick-thin decomposition.  Accordingly for a surface with small geodesic-lengths, a product of unit-norm Beltrami differentials can be pointwise small.  Theorem \ref{main} provides a converse - for almost vanishing curvature the quantities of (\ref{mainineq}) are almost equal.  The product $\mu_1\overline{\mu_2}$ is suitably small. The examples exhibit the behavior of the product of Beltrami differentials being pointwise small.    

The examples suggest a general behavior for almost vanishing curvature. Given an orthonormal basis for a tangent $2$-plane, choose two components of the thick-thin decomposition such that the $L^2$-mass of each Beltrami differential is substantial on the combined components. Then choose a new basis for the $2$-plane with each differential approximately supported on one of the two components.  We expect the sectional curvature to correspond to the propagation decay of the Green's function between the two components. Crossing a half $\alpha$ collar contributes a propagation factor $\ela$ and crossing a full $\alpha$ collar contributes a propagation factor $\ela^3$.  We expect that the optimal value of the exponent in Theorem \ref{main} is three.    

The propagation decay of the Green's function $G(p,q)$ depends on the collars and half collars crossed by a minimal length path from $p$ to $q$.  Since the diameter of the thick region is bounded, the minimal path length is the sum of collar and half collar widths plus a bounded quantity.  In Proposition \ref{Ksum}, Corollary \ref{Gsum} and Theorem \ref{main} we use the sum of all collar widths about short geodesics as an upper bound for the minimal path length.  In fact the collars crossed by a minimal path depends on the topology of the location of the collars.  For pinching the \emph{A cycles} of a standard homology basis then a minimal path crosses at most two half collars - the maximal configuration is for the endpoints on distinct geodesics.  In this case the three results are valid with the product of the two smallest lengths replacing the product of all short lengths.  In some contrast, for the thick regions arranged in a sequence as a \emph{string of pearls} then a minimal path may cross all the collars and the product of all small geodesic-lengths is needed.  An analysis of the proof of Theorem \ref{main} shows that the exponent seven is twice the expected exponent for a collar plus one for application of the mean value estimate.  If an endpoint lies on a short geodesic and a minimal path at most crosses that half collar, then the resulting exponent is three.   In particular for pinching the A cycles, the three results are given in terms of the product of the two smallest lengths raised to the third power.    

%\bibliographystyle{alpha}
%\bibliography{scottbib}

\providecommand\WlName[1]{#1}\providecommand\WpName[1]{#1}\providecommand\Wl{Wlf}\providecommand\Wp{Wlp}\def\cprime{$'$}

\end{document}